\definecolor{unbleu}{rgb}{0.03, 0.15, 0.4}
 \newtheorem{theorem}{Theorem}[section]
 \newtheorem{proposition}[theorem]{Proposition}
\newtheorem{corollary}[theorem]{Corollary}
\theoremstyle{definition}
\newtheorem{definition}[theorem]{Definition}
\newtheorem{remark}[theorem]{Remark}
\newcommand{\C}{\mathbb C}
\newcommand{\R}{\mathbb R}
\newcommand{\Z}{\mathbb Z}
\newcommand{\N}{\mathbb N}
\begin{document}

\title[]{Hamiltonian systems and monotone twist mappings for braids}

\author[Y. Kajihara]{Yuika Kajihara}
\address{Department of Mathematics, Kyoto University, Kitashirakawa Oiwake-cho, Sakyo-ku,
Kyoto, 606-8502, Japan}
\email{kajihara.yuika.6f@kyoto-u.ac.jp}

\author[M. Shibayama]{Mitsuru Shibayama}
\address{Department of Applied Mathematics and Physics, 
Graduate School of Informatics, Kyoto  University, Yoshida-Honmachi, Sakyo-ku, Kyoto 606-8501, Japan}
\email{shibayama@amp.i.kyoto-u.ac.jp}

 \date{\today}
\subjclass[2010]{\textcolor{black}{Primary} }
\keywords{}

\begin{abstract}
In 1986, Moser showed that for a given area-preserving map, there exists a Hamiltonian system that realizes it on the Poincaré section.  
Using his technique, we show that for any braid, there exists a Hamiltonian system whose orbits realize the given braid.  
In particular, when the braid is pseudo-Anosov, so is the Poincar\'e map of the corresponding Hamiltonian. 
\end{abstract}
\maketitle

\section{Introduction}
Consider a discrete dynamical system defined on a two-dimensional region bounded by two invariant curves, induced by a \textit{monotone twist mapping} on an annulus (this name comes from Mather \cite{Mather82}), defined as follows.
\begin{definition}[Monotone twist mappings]
\label{defi:mtm}
Set $a,b \in \R$ with $a<b$. We call
\[
\varphi \colon (x,y) \to (x_1,y_1) (= (f(x,y),g(x,y)))
\]
a monotone twist mapping on an annulus
\[
A=\{(x,y) \in \mathbb{R}^2 \mid a \le y \le b \}
\]
if $f, g \in C^{\infty}(A)$ and $\varphi$ satisfy the following properties.
\begin{enumerate}
    \item $\partial(f,g)/\partial(x,y) \equiv 1$
    \item $f(x+1,y)=f(x,y)+1, g(x+1,y)=g(x,y)$
    \item $g(x,y)=y$ for $y=a,b$
    \item $\partial f/\partial y>0$
\end{enumerate}
\end{definition}
Although the set $A$ is not precisely an annulus, we can regard $f,g$ as the lifts for a map on $\R / \Z \times [a,b]$ by the condition $(2)$ in Definition \ref{defi:mtm}.
It is known, through the work of Moser \cite{Moser86}, that a trajectory of this system can be realized on a Poincar\'{e} section of periodic solutions of a Hamiltonian system.
His main theorem is
\begin{theorem}[\cite{Moser86}]
\label{theorem:Moser}
For a given $C^\infty$-monotone twist mapping $\varphi$, there exists a Hamiltonian function $H=H(t,x,y) \in {C}^{\infty}(\R \times A)$ with the following properties.
\begin{enumerate}
\item It satisfies the following three conditions.
\begin{enumerate}
\item $H(t+1,x,y)=H(t,x,y)=H(t,x+1,y)$
\item $H_x(t,x,y)=0$ for $y=a,b$
\item $H_{yy}>0$
\end{enumerate}
\item The flow $\varphi_t$ taking the initial values $(x_0,y_0)$ of the solutions of
\[
\dot{x} = H_y(t,x,y), \ \dot{y}=- H_x(t,x,y)
\]
into the values $(x(t),y(t))$ agrees with the given mapping $\varphi$ for $t=1$, i.e.,
$\varphi(x(0),y(0))=(x(1),y(1))$.
\end{enumerate}
\end{theorem}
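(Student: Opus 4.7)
The strategy is to exploit the generating function guaranteed by the monotone twist property of $\varphi$, and then to interpolate through a one-parameter family of twist maps whose time dependence produces the Hamiltonian via Hamilton--Jacobi.

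The first step is to extract a generating function $h(x,x_1)$ of $\varphi$. Because $\partial f/\partial y>0$, the relation $x_1=f(x,y)$ can be inverted in $y$, so $(x,y)\leftrightarrow(x,x_1)$ form legitimate coordinates on (the lift of) $A$. The area-preserving condition (1) makes the one-form $y_1\,dx_1-y\,dx$ closed, and together with the periodicity (2) this yields a function $h$ with $y=-h_x(x,x_1)$, $y_1=h_{x_1}(x,x_1)$, $h_{xx_1}<0$, and $h(x+1,x_1+1)=h(x,x_1)$. Condition (3) translates into a boundary condition on $h$ at $y=a,b$.

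The second step is to construct a smooth family $S(t,x,X)$ of generating functions for $t\in(0,1]$ with $S(1,x,X)=h(x,X)$, $S_{xX}(t,x,X)<0$ throughout, and singular short-time behavior $S(t,x,X)\sim(X-x)^2/(2t)$ as $t\to 0^+$, so that the associated canonical maps $\varphi_t$ tend to the identity. The Hamiltonian is then recovered by Hamilton--Jacobi: along a trajectory ending at horizontal coordinate $X$ at time $1$, the relation $y(t)=-S_x(t,x(t),X)$ holds, and differentiating in $t$ while substituting Hamilton's equations gives $H(t,x,y)=-S_t(t,x,X)$ with $X=X(t,x,y)$ obtained by inverting $y=-S_x$ (possible by the strict twist condition on $S$). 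Periodicity in $x$ is inherited from $h$; the boundary condition $H_x=0$ at $y=a,b$ follows from condition (3) through the corresponding property of $S$; and the convexity $H_{yy}>0$ is exactly $S_{xX}<0$ transferred through Legendre duality. Time-periodicity in $t$ is arranged at the end by smoothly matching $H$ at $t=1$ back to $t=0$.

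The principal obstacle lies in step two. The identity is not a monotone twist map, so no smooth path of twist maps reaches it in finite positive twist strength; the singular scaling $(X-x)^2/(2t)$ is exactly what absorbs this degeneracy into the generating function while leaving $H$ smooth on $\R\times A$. Engineering $S$ so that $S_{xX}<0$ holds uniformly on $t\in(0,1]$, so that $S$ matches $h$ smoothly at $t=1$, so that the periodicity and boundary conditions propagate along the family, and so that the resulting $H$ can be made $1$-periodic in $t$ without breaking the convexity or boundary conditions, is the quantitative heart of the construction and the source of its technical weight.
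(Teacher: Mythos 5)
The paper itself does not prove this statement; it cites \cite{Moser86}, so the comparison has to be with Moser's own construction, which the paper leans on directly (Section~2.2 uses the fact that ``the extremals are the straight lines $x(t)=x_0+t(x_1-x_0)$'').

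Your route and Moser's are related but not the same. Moser works on the Lagrangian side: he builds a function $F(t,\xi,\eta)$, $1$-periodic in $t$ and $\xi$, with $F_{\eta\eta}>0$, such that every straight line $x(t)=x_0+t(x_1-x_0)$ is an extremal of $\int_0^1 F(t,x,\dot x)\,dt$ and the action along that line equals the generating function $h(x_0,x_1)$. The Hamiltonian is then the fiberwise Legendre transform of $F$, and $H_{yy}>0$ is automatic because it is the Legendre dual statement of $F_{\eta\eta}>0$. Your proposal instead tries to build the time-dependent action/generating function $S(t,x,X)$ directly and read off $H$ from Hamilton--Jacobi. That is a coherent strategy, but it bypasses the feature of Moser's construction that the paper actually uses later (explicit linear extremals in the monotonicity argument of Section~2.2).

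There is also a genuine gap in your step three. You claim ``the convexity $H_{yy}>0$ is exactly $S_{xX}<0$ transferred through Legendre duality.'' That identification is false. The twist condition $S_{xX}<0$ on the family of time-$t$ generating functions and the convexity $H_{yy}>0$ of the Hamiltonian are not Legendre duals of one another and are not equivalent: $S_{xX}$ involves second derivatives of $S$, whereas after eliminating the auxiliary endpoint via $y=-S_x$ (or $Y=S_X$) the quantity $H_{yy}$ is a rational expression in the second \emph{and third} derivatives of $S$, of the form $-(S_{txx}S_{xX}-S_{tx}S_{xxX})/S_{xX}^3$ (up to the sign conventions one fixes). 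Hence $S_{xX}<0$ alone does not give $H_{yy}>0$, and arranging the extra third-order conditions is not a side remark but the entire difficulty. This is precisely what Moser's choice of a strictly convex Lagrangian with linear extremals is engineered to resolve, and it is why his construction goes through the Lagrangian rather than through a family of generating functions. Relatedly, the Hamilton--Jacobi bookkeeping in your plan mixes the initial and current endpoints: from $y=-S_x(t,x,X)$, $Y=S_X(t,x,X)$ one obtains $H(t,X,Y)=-S_t(t,x,X)$ with $x$ eliminated via $Y=S_X$, not $H(t,x,y)=-S_t(t,x,X)$ with $X$ eliminated via $y=-S_x$; the latter evaluates $H$ at the wrong phase point. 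These sign/variable issues are fixable, but the $H_{yy}$ claim is the substantive missing idea.
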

The aim of this paper is to construct a Hamiltonian that realizes the given braids, applying Moser's work.
To state our main theorem, we only need an extremely basic definition and properties of braids.
Let $B_n$ be the braid group on $n$ strands, i.e.,
\[
B_n=\left\langle \sigma_1, \dots, \sigma_{n-1}
\left|
\begin{matrix}
\ \sigma_i \sigma_j= \sigma_j \sigma_i \hspace{12mm} \mbox{if}\ |i-j|>1 \\
\ \sigma_i \sigma_j \sigma_i = \sigma_j \sigma_i \sigma_j \hspace{5mm} 
 \mbox{if}\ |i-j|=1
\end{matrix}
\right.
\right\rangle
\]
where $\sigma_i$ and products represent geometric objects, as shown in Figure \ref{fig:3braids}.
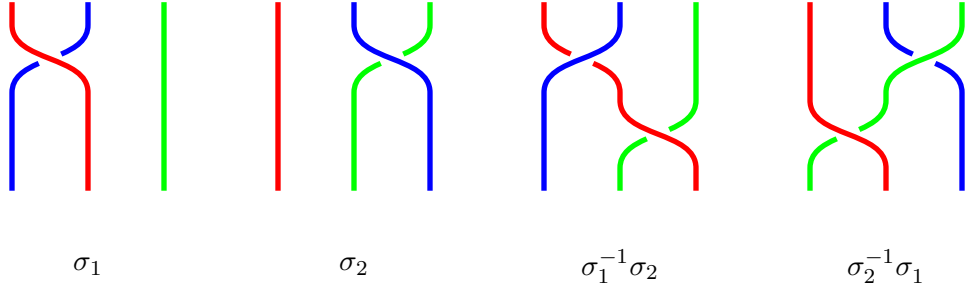
\begin{figure}[tbp]
    \centering
   \begin{tikzpicture}[
  /pgf/braid/.cd,
  line width=2pt,
  style strands={1}{red},
  style strands={2}{blue},
  style strands={3}{green},
  number of strands=3
  ]
\braid (1) at (0,0) s_1 1 ;
\node at ([yshift=-1cm]1-rev-2-e) {$\sigma_1$};

\braid (2) at ([xshift=2.5cm] 1-2-s) s_2 1 ;
\node at ([yshift=-1cm]2-rev-2-e) {$\sigma_2$};

\braid (321) at ([xshift=2.5cm] 2-2-s) s_1^{-1} s_2;
\node at ([yshift=-1cm]321-rev-2-e) {\(\sigma_1^{-1} \sigma_2\)};

\braid (123) at ([xshift=2.5cm]321-2-s) s_2^{-1} s_1;
\node at ([yshift=-1cm]123-rev-2-e) {\(\sigma_2^{-1} \sigma_{1}^{}\)};
\end{tikzpicture}
\vspace{-8mm}
    \caption{Geometric $3$-braids}
    \label{fig:3braids}
\end{figure}
The main theorem is as follows.
\begin{theorem}[Main Theorem $1$]
\label{theorem:main}
    Let $B_n$ be the braid group of $n$ strands and set
    \[
    A=\{(x,y) \in \R / \Z \times\mathbb{R}  \mid a \le y \le b \}.
    \]
    For any braid $b \in {B}_{n}$, there exists a Hamiltonian function $H_b=H_b(t,x,y) \in C^{\infty}(\R / \Z \times  A)$ satisfying the following properties:
    there are $n$ points on the plane, say $a_1, \cdots,a_n$, such that the set defined by 
    \[\{ \{\varphi_t(a_1),\cdots,\varphi_t(a_n)\} \times \{t\} \mid t \in [0,1]\}\] 
    represents a geometric braid $b$, where $\varphi_t$ is a Hamiltonian flow for $H_b$.
\end{theorem}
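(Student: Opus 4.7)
The plan is to reduce to the case of a single Artin generator and then concatenate the resulting Hamiltonians in time, relying on Moser's theorem (Theorem \ref{theorem:Moser}) for each piece. I would begin by fixing $n$ distinct initial points $a_j = (x_0, y_j) \in A$ with $y_1 < y_2 < \cdots < y_n$, and by writing $b$ as a word in the Artin generators: $b = \sigma_{i_1}^{\epsilon_1} \cdots \sigma_{i_k}^{\epsilon_k}$. It then suffices to construct, for each factor $\sigma_{i_j}^{\epsilon_j}$, a Hamiltonian whose flow exchanges $a_{i_j}$ and $a_{i_j+1}$ along paths representing a single crossing of the prescribed sign while fixing the other $a_\ell$.

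For a single generator, I would design an auxiliary monotone twist map $\varphi_j$ on $A$ as follows: inside a small strip around the midpoint of the heights $y_{i_j}$ and $y_{i_j+1}$, prescribe a half-turn rotation about that midpoint, which exchanges the two chosen points; outside the strip, take the identity. To make the global map satisfy the monotone twist condition $\partial f/\partial y > 0$, I would superimpose a mild background twist $(x,y)\mapsto(x+\alpha(y),y)$ with $\alpha' > 0$ concentrated where needed, and smooth everything via bump functions while preserving area by a flux adjustment. The orientation of the half-turn is chosen to match $\epsilon_j = \pm 1$. Applying Theorem \ref{theorem:Moser} to $\varphi_j$ yields a smooth time-dependent Hamiltonian $H^{(j)}$ whose time-$1$ flow is $\varphi_j$.

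I would then assemble $H_b$ by concatenation: on $t \in [(j-1)/k,\, j/k]$, set $H_b(t,x,y) = k\, H^{(j)}\bigl(kt - (j-1),\, x,\, y\bigr)$, after first reparametrizing each $H^{(j)}$ in time by a smooth function that vanishes to infinite order at $t=0,1$, so that the pieces glue in $C^\infty$ and the assembled Hamiltonian is $1$-periodic in $t$. The restriction of the resulting flow to the $n$ initial points concatenates the $k$ single-crossing paths, producing precisely the geometric braid $b$.

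The principal difficulty lies in the second step and, more precisely, in verifying that the Moser interpolation from the identity to $\varphi_j$ really sweeps out a single crossing of sign $\epsilon_j$, rather than a homotopically different path differing by extra full twists. Moser's theorem prescribes only the time-$1$ map, whereas the braid isotopy class depends on the entire flow; so controlling the braid type will likely require analyzing the explicit generating function used in Moser's interpolation and ruling out excess winding by a direct degree argument near the pair of moving points. If unavoidable extra twists arise, I would absorb them by pre- or post-composing $\varphi_j$ with the appropriate power of a full twist (itself realizable in the Moser framework), so that the overall geometric braid is still exactly $b$.
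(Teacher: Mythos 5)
Your high-level strategy (reduce to a single Artin generator, realize it via Moser's theorem, then concatenate the Hamiltonians in time with smoothing at the junctions) does coincide with the paper's. However, the technical heart of the argument is the construction of a monotone twist map realizing one crossing, and there your proposal has a genuine gap that the paper takes care to avoid.

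You propose a single map $\varphi_j$ that is a half-turn rotation near the midpoint of the two exchanged points, equal to the identity outside a strip, with a ``mild'' background shear $\alpha(y)$ added to salvage the twist condition. This does not work as stated. A $\pi$-rotation has $\partial X/\partial y = \sin\pi = 0$, and in the transition annulus between the rotation and the identity the radially varying angle $\theta(r)$ contributes terms of order $r\,|\theta'(r)| \sim \pi$ of both signs to $\partial X/\partial y$, so the shear needed to dominate them is not mild. Worse, since you place the other marked points $a_\ell$ at the same $x$-coordinate but different $y$-coordinates, any nontrivial shear $x \mapsto x + \alpha(y)$ with $\alpha'>0$ moves them, contradicting your requirement that they stay fixed. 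The paper sidesteps both problems: it puts all $n$ marked points on the line $y=0$ with distinct $x$-coordinates (so the ambient shear $x\mapsto x+(\sin\theta)y$ fixes them), and, crucially, it does \emph{not} attempt to realize the full half-turn as a single monotone twist map. Instead it builds one small-angle rotation $\psi$ (angle $\theta=\pm\pi/2n$), verifies via the generating function that $\psi$ is a monotone twist map, applies Moser's theorem to $\psi$ alone, and then flows for time $n$ to obtain $\varphi=\psi^n$. The paper's Remark explicitly records that $\varphi=\psi^n$ is \emph{not} a monotone twist map, which is exactly the point your single-map approach runs into.

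Your ``principal difficulty'' (ruling out excess winding in Moser's interpolation) is real, but your placement of the points undermines the tool that resolves it. In Moser's construction the extremals are linear in $x$, $x(t)=x_0+t(x_1-x_0)$. If the two exchanged points share the same $x$-coordinate and the half-turn swaps them vertically, then $x_1=x_0$ and the interpolating strands sit at $x\equiv x_0$ for all $t$: the $(x,t)$-projection used to read off the braid word is degenerate. With the paper's horizontal placement, each of the $n$ Moser steps produces a monotone $x(t)$ on each strand, and the $\pi$-rotation is swept out gradually over $n$ unit time-intervals, so exactly one crossing of the prescribed sign is produced and no ``absorbing extra full twists'' is needed. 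To repair your proposal you would essentially have to rotate the picture $90^\circ$ and replace the one-shot half-turn by an iterated small rotation, at which point you recover the paper's construction.
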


The relationship between braid groups and mapping class groups is well known.
We give a brief remark. 
Let $D_n$ be a $2$-dimensional disk with $n$ punctures, and $\mathrm{MCG}(D_n)$ be the mapping class group of $D_n$.
Then we can construct a surjective homomorphism from $B_{n}$ to $\mathrm{MCG}(D_n)$.
The Nielsen-Thurston classification \cite{Thurston88} implies that each element $g \in \mathrm{MCG}(D_n)$ satisfies one of three conditions:
\begin{enumerate}[(i)]
    \item periodic: there exists $k \in \N$ such that $g^k=\mathrm{id}$.
    \item reducible: there exists a finite union of disjoint simple closed curves $C_1, \cdots,C_k$ that $g$ preserves.
    \item pseudo-Anosov: otherwise.
\end{enumerate}
Thus, we can consider each braid  to have any of the properties (i) to (iii).
In the context of dynamical systems, pseudo-Anosov types are most interesting because they imply some hyperbolicity.
More precisely, if $g$ is pseudo-Anosov, there exist a $\mu>1$ called a ``stretch factor'' and a pair of measured transversal foliations $(F^+, \mu^{+}), (F^{-}, \mu^{-})$ that satisfy
\[
g((F^+, \mu^{+})) = (F^+, \lambda\mu^{+}), \ \text{and} \ g((F^{-}, \mu^{-}))=(F^{-}, \lambda^{-1}\mu^{-}).
\]
For a braid $b \in B_n$ of the pseudo-Anosov type, the Hamiltonian $H_{b}$ obtained in Theorem \ref{theorem:main} is pseudo-Anosov, which immediately indicates the following corollary.
\begin{corollary}[Main Theorem $2$]
For any pseudo-Anosov map derived from a braid, we construct a Hamiltonian system whose Poincar\'e map is isotope to the pseudo-Anosov map.
\end{corollary}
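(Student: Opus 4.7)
The plan is to combine Theorem \ref{theorem:main} with the classical dictionary between braids and mapping classes of punctured surfaces. Given a pseudo-Anosov braid $b \in B_n$, Theorem \ref{theorem:main} produces a Hamiltonian $H_b$ and marked points $a_1,\dots,a_n$ such that the space-time trajectory $\{(\varphi_t(a_i),t)\}$ traces out the geometric braid $b$. The candidate Poincar\'e return map is the time-$1$ map $\varphi_1$, viewed as a diffeomorphism of the annulus $A$ that setwise preserves the punctured set $P = \{a_1,\dots,a_n\}$ (possibly with a permutation given by the underlying permutation of $b$).

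First I would set up the mapping class framework. Working on $A = \R/\Z \times [a,b]$, the boundary condition $H_x(t,x,y)=0$ at $y=a,b$ (carried over from Moser's hypotheses in Theorem \ref{theorem:Moser}) guarantees that $\varphi_1$ fixes both boundary circles pointwise. Hence $\varphi_1$ represents a well-defined element $[\varphi_1] \in \mathrm{MCG}(A, P)$. After forgetting one boundary component (or capping it with a disk) one obtains an element of $\mathrm{MCG}(D_n)$, and this reduction respects the Nielsen--Thurston type because the capped boundary is a fixed disk disjoint from $P$.

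The key step is to identify $[\varphi_1]$ with $\rho(b)$, where $\rho : B_n \to \mathrm{MCG}(D_n)$ is the standard surjection. This uses the classical Artin--Birman picture: the Hamiltonian flow $\{\varphi_t\}_{t\in[0,1]}$ is a smooth isotopy from $\mathrm{id}$ to $\varphi_1$ through diffeomorphisms of $A$, and the trace of this isotopy on the marked set $P$ is exactly the geometric braid realized in Theorem \ref{theorem:main}, namely $b$. By the standard correspondence, the isotopy class rel $P$ of the endpoint of such a loop in $\mathrm{Diff}(A)$ is precisely $\rho$ applied to the tracked braid, so $[\varphi_1]=\rho(b)$. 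Since $b$ is pseudo-Anosov and $\rho$ preserves (or rather descends with) the Nielsen--Thurston type on its image, $[\varphi_1]$ is a pseudo-Anosov mapping class, meaning $\varphi_1$ is isotopic rel $P$ to a pseudo-Anosov homeomorphism with the prescribed stretch factor $\mu$ and invariant foliations $(F^{\pm},\mu^{\pm})$.

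The main obstacle I expect is bookkeeping between the different surfaces involved: the Hamiltonian naturally lives on an annulus, the braid-to-MCG homomorphism in the paper is stated for the punctured disk $D_n$, and one must be careful about which boundary conventions (free, fixed pointwise, or capped) are in force, since boundary Dehn twists can a priori alter the mapping class. Once the boundary behaviour enforced by $H_x|_{y=a,b}=0$ is correctly translated into the mapping class group setting, the rest of the argument is a direct application of Theorem \ref{theorem:main} and the Nielsen--Thurston classification.
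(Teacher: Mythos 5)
Your proof is correct and elaborates exactly the argument that the paper treats as immediate: the paper states the corollary follows directly from Theorem \ref{theorem:main} without spelling out the Artin--Birman correspondence, which is what you supply. One small inaccuracy worth flagging: the condition $H_x(t,x,y)=0$ at $y=a,b$ only forces $\dot y = 0$ there, so $\varphi_1$ preserves each boundary circle \emph{setwise} (acting by a circle diffeomorphism coming from $\dot x = H_y$), not pointwise as you claim; this does not break the argument since setwise invariance of the boundary is all that is needed to pass from $\mathrm{MCG}(A,P)$ to $\mathrm{MCG}(D_n)$ by capping, but the distinction matters when one worries about boundary Dehn twists, which you rightly raise as the main bookkeeping point.
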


\section{Construction of generators for the geometric braids}
\label{sec:generator}
\subsection{Construction of basic twist maps}
\label{sec:const_sigma}
Firstly, we present a Hamiltonian for trivial braids.
It is simple, and there is no need to use Theorem \ref{theorem:Moser}.
Set
\[
H_{0}(t,x,y) = cy^2.
\]
Taking the initial values $(a_0,0), \cdots, (a_n,0)$, each trajectory draws a straight line, which represents a trivial $n$-braid, and $H_0$ gives trivial braids with $n$ strands.

Next, we construct a Hamiltonian for $\sigma_1$.
Fix $\varepsilon>0$ arbitrarily and take sufficiently small $\delta>0$.
Consider a mapping
\[
\psi \colon (x,y) \mapsto (X,Y)
\]
given by
\begin{align}
\label{desired_twist}
\begin{split}
&(X,Y)=
\begin{cases}
    ( \psi_1(x,y) , \psi_2(x,y)) \ &  \|(x-\lfloor{x}\rfloor, y)\| \le   {\delta} \\
     (x + (\sin \theta) y,y)\ & \|(x-\lfloor{x} \rfloor,y  \rfloor)\| \ge {2\delta}\\
    \tilde\psi(x,y) & \text{otherwise}
\end{cases}
\end{split}
\end{align}
where
\begin{align*}
    \psi_1(x,y)&= (\cos \theta)(x-\lfloor{x} \rfloor)+(\sin \theta) y + \lfloor{x} \rfloor,\\
   \psi_2(x,y)&=- (\sin \theta) (x-\lfloor{x}\rfloor)+(\cos \theta) y,
\end{align*}
and $\tilde\psi(x,y)$ is provided later.
To be precise, we do not define $\tilde\psi$ specifically, but rather define a generating function that gives $\psi$.
To obtain the desired Hamiltonian for a given braid, our strategy is to construct ${\psi}$ such that the map defined above is a smooth monotone twist mapping.
A trajectory near the origin determined by the precise iteration of $\psi$ corresponds to a generator for braids.
For example, set
\begin{align}
\label{eq:a123_set}
a_1=(-\delta,0), a_2=(\delta,0), \ \text{and} \ a_3=(3\delta,0).
\end{align}
Then we can imagine that three points $\{a_1,a_2,a_3\}$ drew $\sigma_1$ in ${B}_3$ by acting the iteration of $\psi$.
It is easily seen that the generating functions for
\begin{align}
\label{eq:map1}
(X,Y)= ((\cos \theta)x +(\sin \theta) y, - (\sin \theta) x+(\cos \theta) y) ,
\end{align}
and
\begin{align}
\label{eq:map2}
(X,Y)= (x + (\sin \theta)y,y)
\end{align}
are respectively denoted as  
\begin{align}
\label{eq:g1}
\begin{split}
g(x,X)
&= \frac{1}{\sin \theta} \left( \frac{\cos \theta}{2} (x^2 + X^2) -x X \right)\\
&=\frac{1}{\sin \theta} \left( \frac{1}{2}(x - X)^2 - \frac{1-\cos \theta}{2} (x^2+X^2) \right)
\end{split}
\end{align}
and 
\begin{align}
\label{eq:g2}
g(x,X)=
\frac{1}{2\sin \theta}(x-X)^2.
\end{align}
The maps $\eqref{eq:map1}$ and $\eqref{eq:map2}$ are determined, respectively, from $\eqref{eq:g1}$ and $\eqref{eq:g2}$ by using the relation
\[
g_x=-y, \ g_X=Y.
\]
We next construct a generating function for $\eqref{desired_twist}$.
Let 
\[ 
g(x, X)= \frac{1}{2\sin \theta} (x-X)^2-\frac{1}{2} \sum_{k \in \mathbb{Z}} \gamma(x-k, X-k)((x-k)^2+(X-k)^2),
\] 
where $\gamma(x, X)$ is smooth, and set
\begin{align}
\label{eq:theta}
    \theta = \frac{\pi}{2n}, \text{or} \ \theta = -\frac{\pi}{2n}
\end{align}
for some $n \in \N \backslash \{1,2\}$, and
\[
\xi =
\frac{1-\cos \theta}{\sin \theta}=\tan \frac{\theta}{2}
\]
This function satisfies $g(x+1, X+1)=g(x, X)$ for any $(x,X) \in \R^2$.
We have
\begin{align*}
    g_x = \frac{1}{\sin \theta}(x-X)- &\frac{1}{2} \sum_{k \in \Z} \gamma_x(x-k, X-k) \{(x-k)^2+(X-k)^2\}\\
    &- \sum_{k \in \Z} \gamma(x-k, X-k)(x-k),
\end{align*}
and
\begin{align*}
    g_{xX} = -\frac{1}{\sin \theta}
    & - \frac{1}{2}\sum_{k \in \Z} \gamma_{xX}(x-k, X-k) \{(x-k)^2+(X-k)^2\}\\
    &- \sum_{k \in \Z} \gamma_{x}(x-k, X-k) (X-k)- \sum_{k \in \Z} \gamma_X(x-k, X-k)(x-k). 
\end{align*}
In order to realize a map satisfying the desired conditions, 
$g_{xX}$ should always be negative
because it is a known fact that if a generating function $g$ for $f$ is $(1,1)$-periodic and there is a positive constant $C$  such that $g_{xX} \le -C$, then $f$ is a monotone twist mapping (See \cite{Bangert88}). 

Near the origin, we get 
\begin{align*}
    g_{xX} = -\frac{1}{\sin \theta} -  \gamma_{xX}(x, X) \frac{x^2+X^2}{2}-  \gamma_{x}(x, X) X  -  \gamma_X(x, X)x
\end{align*}
If $g$ depends only on the distance from the origin, that is, it can be expressed as
\[
{\gamma}(x, X) = \rho(x^2 + X^2)
\]
by abuse of notation,
we can represent $g_{xX}$ by
\begin{align*}
    g_{xX}
    &= -\frac{1}{\sin \theta} - 4xX \rho'' \frac{x^2+X^2}{2}-  4x X \rho'\\
    &= -\frac{1}{\sin \theta} - 4xX \left(\rho'' \frac{x^2+X^2}{2}-   \rho' \right).
\end{align*}
The twist condition $g_{xX}<0$ is satisfied if and only if
\[
4xX \left(\rho'' \frac{x^2+X^2}{2}-\rho' \right)>-\frac{1}{\sin \theta}.
\]
Changing the coordinate $(x, X)$ to $r(\cos \theta, \sin\theta)$ implies that
\[
4r^2\cos \theta \sin^2 \theta \left(\rho'' \frac{r^2}{2}-\rho' \right)
=2r^2 \sin \theta \sin 2\theta \left( \rho'' \frac{r^2}{2} - \rho' \right) >-1.
\]
Hence, to construct $g$ with $g_{xX}<0$, it suffices to define $\rho$ so that
\begin{align}
\label{ineq:f_condition}
\left|\rho'' \frac{r^2}{2}-\rho' \right|<\frac{1}{2r^2}.
\end{align}
Now, let us discuss the construction of $\rho$.
Set two functions $a$ and $b$ by
\begin{align*}
    a(t)=\begin{cases}
    e^{-1/t} & t >0 \\
    0 & t \le 0 
    \end{cases} \ , \ \text{and} \
    b(t)=\frac{a(t)}{a(t)+a(1-t)}.
\end{align*}
Moreover, we define a function $c_{\varepsilon}$ for $\varepsilon>0$ by
\[
c_\varepsilon(t)= b \left( -\frac{t}{\varepsilon} +2 \right)
\]
It is clear that $c_\varepsilon$ satisfies 
\[
c_\varepsilon(t)
\begin{cases}
    =1 & 0 \le t \le \varepsilon \\
    \in (0, 1) & \varepsilon < t < 2 \varepsilon \\
    =0 & t \ge 2\varepsilon
\end{cases}
\]
and, for $t \in (\varepsilon,2 \varepsilon)$, 
\begin{align*}
     c_\varepsilon'(t)=-\frac{1}{\varepsilon}b' \left( -\frac{t}{\varepsilon} +2 \right), \ \text{and} \ 
     c_\varepsilon''(t)=\frac{1}{\varepsilon^2} b'' \left( -\frac{t}{\varepsilon} +2 \right).
\end{align*}
Using $c_{\varepsilon}$, we define $\rho \colon \R_{\ge 0} \to \R$ by
\[
\rho= \xi c_\varepsilon.
\]
By simple calculation, it is easily seen that $f$ satisfies $\eqref{ineq:f_condition}$ if we take small $|\xi|$, that is, $n$ of $\eqref{eq:theta}$ sufficiently large.

In the $\varepsilon$-neighborhood of the origin, $g$ can be represented by
\[
g(x,X)= \frac{(x-X)^2}{2 \sin \theta} - \frac{\xi (x^2+X^2 )}{2}.
\]
Since the relation between $(x,X)$ and $(y,Y)$ is given by
\begin{align*}
y &=-g_x= -\frac{1}{\sin \theta}(x-X)+\xi x
=\left( \xi- \frac{1}{\sin \theta} \right)x + \frac{1}{\sin \theta} X , \ \text{and}\\
Y &= g_X = \frac{1}{\sin \theta} ( X-x) -\xi X
=-\frac{1}{\sin \theta}x + \left( \frac{1}{\sin \theta}-\xi \right)X ,
\end{align*}
we get a map $\psi$ $(x,y) \mapsto (X,Y)$ around the origin defined by
\begin{align}
\label{eq:linear_map}
\begin{split}
X&=(\cos \theta) x + (\sin \theta)y\\
Y&=  -(\sin \theta)x + (\cos \theta) y.
\end{split}
\end{align}
Then the $n$-iteration of $\eqref{eq:linear_map}$ represents the $\pi$- \ (or $(-\pi)$-) rotation.
This is the desired mapping $\varphi (= \psi^n)$.
\begin{remark}
    Although $\psi$ satisfies the twist condition,
    the mapping ${\varphi}$ given above does not.
    In fact, the composition of twist mappings is not, in general, a twist mapping. 
\end{remark}

\subsection{Monotonicity}
In this section, we ensure that the Hamiltonian $H$ obtained from $\varphi$ in the previous step realizes the prescribed braid.

By Theorem \ref{theorem:Moser}, we obtain a Hamiltonian ${H}$ for ${\psi}$, and the flow $(x(t),y(t))$ satisfies
\[
\varphi(x(0),y(0)) = {\psi}^n(x(0),y(0))=(x(n),y(n)).
\]
Clearly, the sets $\eqref{eq:a123_set}$ are transferred to
\[
\varphi(a_1)=(\delta,0), \
\varphi(a_2)=(-\delta, 0), \text{and} \
\varphi(a_3)=(3\delta, 0 ).
\]
Theorem \ref{theorem:Moser} implies
\begin{proposition}
\label{prop:Hamiltonian-psi}
    There exists a Hamiltonian $H$ satisfying the same conditions as Theorem \ref{theorem:Moser} for $\psi$ in Section \ref{sec:const_sigma}.
\end{proposition}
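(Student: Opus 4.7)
The plan is to verify that the map $\psi$ constructed in Section \ref{sec:const_sigma} is a monotone twist mapping in the sense of Definition \ref{defi:mtm}, and then apply Theorem \ref{theorem:Moser} directly to obtain the desired Hamiltonian $H$.

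The verification breaks into four items. First, smoothness and area preservation: since $\psi$ is defined implicitly through the generating function $g$ via the relations $y=-g_x$ and $Y=g_X$, area preservation is automatic from this formulation. To get a well-defined smooth map $(x,y)\mapsto(X,Y)$ by the implicit function theorem, it suffices to know that $g_{xX}$ is bounded away from zero. This is exactly the content of the estimate \eqref{ineq:f_condition}: by choosing $n$ in \eqref{eq:theta} large enough, $|\xi|=|\tan(\theta/2)|$ becomes small, and the bound $|\rho'' r^2/2-\rho'|<1/(2r^2)$ forces $g_{xX}<0$ globally, with a uniform bound $g_{xX}\le -C$. Second, periodicity: the $(1,1)$-periodicity $g(x+1,X+1)=g(x,X)$ has already been observed, and this translates via $y=-g_x,Y=g_X$ into $f(x+1,y)=f(x,y)+1$ and $g(x+1,y)=g(x,y)$ (here $g$ is the second component of $\psi$, not the generating function).

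Third, the boundary condition. One chooses $a\le -2\delta$ and $b\ge 2\delta$, so that on $y=a$ or $y=b$, the inequality $\|(x-\lfloor x\rfloor,y)\|\ge 2\delta$ holds for every $x$. By the definition \eqref{desired_twist}, $\psi$ then coincides with the pure shear $(x+(\sin\theta)y,y)$ on the boundary components, giving $Y=y$ as required. Fourth, the twist condition $\partial f/\partial y>0$ is equivalent to $g_{xX}<0$ (by the standard relation for twist maps generated by $g$), which was established in the first step.

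The main obstacle I anticipate is not the regimes near the origin or far from it — there $g$ reduces respectively to the rotation-type generating function \eqref{eq:g1} (with the explicit correction from $\rho$) and to the shear generating function \eqref{eq:g2} — but rather the transition region where $c_\varepsilon$ interpolates between $1$ and $0$ on $(\varepsilon,2\varepsilon)$. Here one must check that the contributions involving $c_\varepsilon'$ and $c_\varepsilon''$, which can be large in $1/\varepsilon$, do not overwhelm the $-1/\sin\theta$ term in $g_{xX}$. The key point is that these derivative terms are multiplied by factors of $x$, $X$, or $x^2+X^2$ that are themselves of order $\varepsilon$ in the transition region, so the products remain controlled once $|\xi|$ (equivalently $1/n$) is taken small relative to $\varepsilon$. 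Once this uniform estimate is in place for the single prototype lump $\rho(x^2+X^2)$, the full sum $\sum_{k\in\Z}\gamma(x-k,X-k)((x-k)^2+(X-k)^2)$ inherits the same bound because the supports of the summands are disjoint for $\delta<1/2$. Having verified the four conditions, an application of Theorem \ref{theorem:Moser} yields the Hamiltonian $H$ asserted by the proposition.
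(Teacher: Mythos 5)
Your proof is correct and follows the same route as the paper: verify that $\psi$ is a monotone twist mapping in the sense of Definition~\ref{defi:mtm} (with area preservation, periodicity, and the twist condition coming from the generating function $g$ with $g_{xX}<0$, following the Bangert criterion cited in Section~\ref{sec:const_sigma}), and then invoke Theorem~\ref{theorem:Moser}. You are somewhat more explicit than the paper in spelling out the boundary condition (by choosing $a,b$ outside the support of the correction term so that $\psi$ reduces to the shear there) and the estimate in the transition region of $c_\varepsilon$, both of which the paper leaves implicit.
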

Let $H^{\psi}$ be the corresponding Hamiltonian for $\psi$ obtained from Theorem \ref{theorem:Moser}, and $\psi_t$ be a Hamiltonian flow for $H^{\psi}$.
Then, the set
\begin{align}
\label{eq:3-braid-flow}
 \{ \{\psi_t(a_1),\psi_t(a_2),\psi_t(a_3)\} \times \{t\} \mid t \in [0,n]\}
\end{align}
represents some $3$-braid by the periodicity of $H^{\psi}$.
Based on the above considerations, there is still a possibility that $\eqref{eq:3-braid-flow}$ expresses something other than $\sigma_1$, e.g., $\sigma_1^3$.
Therefore, we need to prove the monotonicity of the given flows.

Along the construction of the flow by Moser \cite{Moser86}, 
the extremals are the straight lines:
\[
x(t) = x_0 + t(x_1-x_0).
\]
Thus $x(t)$ is monotonically decreasing. 
The two solutions, say $(x_1,y_1)$ and $(x_2,y_2)$, with the initial conditions $(x_1(0), y_1(0))=a_1, (x_2(0), y_2(0))=a_2$, construct the braid $\sigma_1$.

\begin{remark}
We provide two remarks.
\begin{enumerate}[(i)]
\item To express $\eqref{eq:3-braid-flow}$ as a $3$-braid, the condition $\psi_s(a_i) \neq \psi_s(a_j)$ for any $s \in [0,n]$ is needed; however, it trivially holds from the existence and uniqueness theorem.
\item Although we omit the details for the calculation of $y(t)$, it is, in fact, also monotone and expressed as a quadratic equation.
\end{enumerate}
\end{remark}

\section{Connection and construction of generators}
\label{sec:connection}
We next construct a mapping that represents $\sigma_i\sigma_j$.
For simplicity, we consider only the case of $(i,j)=(1,2)$.
Let $\varphi_1$ and $\varphi_2$ be the mappings given in Section \ref{sec:const_sigma}, and
each $\varphi_i$ represents $\sigma_i$ for $i=1,2$.
Let $H_i$ be the corresponding Hamiltonian for $\varphi_i$.
Set
\begin{align}
\label{eq:seed-mtm}
\varphi_t (a) =
\begin{cases}
    \varphi_1(t,a)  & t \in [0,n_1] \\
    \varphi_2({t-n_1} , \varphi_1(n_1,a)) & t \in (n_1,n_1+n_2].
\end{cases}
\end{align}
It is clear that $\varphi_t (a) $ is not generally smooth.
Our aim in this section is to use some deformation functions $h_0$, $h_1$, and $h_2$ to construct a ``smooth" Hamiltonian $H$ satisfying $H(t,x,y)=H(t+n_1+n_2,x,y)$ and
\[
H(t,x,y) = 
\begin{cases}
    h_0(t,x,y)  & ( 0 \le t \le \varepsilon)\\
    H_1(n_1(n_1-2\varepsilon)^{-1}(t-\varepsilon),x,y) & (\varepsilon < t < n_1-\varepsilon) \\
    h_1(t,x,y) & ( n_1-\varepsilon \le t \le n_1+\varepsilon)\\
    H_2(n_2(n_2-n_1-2\varepsilon)^{-1}(t-(n_1+\varepsilon)),x,y) & (n_1+\varepsilon < t < n_2-\varepsilon) \\
    h_2(t,x,y)  & ( n_2-\varepsilon \le t \le n_2)
\end{cases},
\]
where $\varepsilon$ is sufficiently small and positive, and $n_i$ is the number of rotations required in Section $\ref{sec:const_sigma}$, that is, $\varphi_i = \tilde{\varphi}^{n_i}$.
According to the smoothing method in \cite{Moser86}, each Hamiltonian $H$ obtained from a monotone twist map is deformed so that it is of class $C^\infty$ as follows: 
\begin{align}
\label{eq:hamiltonian-inf}
\hat{H}(t,x,y)
=
\begin{cases}
    \frac{1}{2} y^2 & (t \in [0,\tilde\varepsilon) \ \text{and} \ t \in (1-\tilde\varepsilon,1))  \\
    \frac{1}{1-2 \tilde\varepsilon} H(\tau,x,y) & (t \in [\tilde\varepsilon,1-\tilde\varepsilon)) 
\end{cases},
\end{align}
where $\tilde\varepsilon$ is positive and sufficiently small, and $\tau = (t-\tilde\varepsilon)/(1-2 \tilde\varepsilon)$.

The Hamiltonian in \eqref{eq:hamiltonian-inf} is $1$-periodic for $t$.
It, however, can be changed to any other period by rescaling.
Thus, we can obtain a new Hamiltonian with $(n_1+n_2)$-period for $t$ so that it connects two Hamiltonian functions obtained from $\varphi_1$ and $\varphi_2$ in $\eqref{eq:seed-mtm}$.
It is seen that such a Hamiltonian is represented by
\[
{H}(t,x,y) = 
\begin{cases}
    \hat{H}_1(t,x,y) & (0 \le t < n_1) \\
    \hat{H}_2(t-n_1,x,y) & (n_1 \le t < n_1+n_2) 
\end{cases}
\]
where $\hat{H}_i$ is $n_i$-periodic for $t$, and it
 is obtained from $\varphi_i$ in $\eqref{eq:seed-mtm}$ by a similar method to $\eqref{eq:hamiltonian-inf}$.
 Rescaling again, we get a Hamiltonian of class $\C^{\infty}(\R \times A)$, that realizes $\sigma_1 \sigma_2$.

\begin{proof}[Proof of Theorem \ref{theorem:main}]
Fix $n \ge 3$ arbitrarily and $b \in B_n$.
We can determine two sequences $\{s_i\}$ and $\{l_i\}$ satisfying
\[
b = \sigma_{s_1}^{l_1} \sigma_{s_2}^{l_2} \cdots \sigma_{s_n}^{l_n}.
\]
By the remarks in Section~\ref{sec:generator} and \ref{sec:connection}, we get a Hamiltonian with period $\sum_{i} |l_i| n_{s_i}$ for $t$
and the desired flow in Theorem \ref{theorem:main} by rescaling.
\end{proof}




\vspace*{33pt}

\noindent
\textbf{Acknowledgement.}~ 
The first author was partially supported by JSPS KAKENHI Grant Number 23K19009 and 23K25778.
The second author was partially supported by JSPS KAKENHI Grant Number 23K25778.

\vspace{11pt}
\noindent
\textbf{Data Availability.}~
Data sharing not applicable to this article as no datasets were generated or analyzed during the current study.


\bibliographystyle{plain}
\bibliography{Braids_Hamiltonian.bib}

\end{document}